\newcommand{\be}{\begin{enumerate}}
\newcommand{\ee}{\end{enumerate}}
\newcommand{\beas}{\begin{eqnarray*}}
\newcommand{\eeas}{\end{eqnarray*}}
\newcommand{\bea}{\begin{eqnarray}}
\newcommand{\eea}{\end{eqnarray}}
\newcommand{\beq}{\begin{equation}}
\newcommand{\eeq}{\end{equation}}
\newcommand{\anbc}[2]{\genfrac{\langle}{\rangle}{0pt}{}{#1}{#2}}
\newcommand{\qbc}[2]{\genfrac{[}{]}{0pt}{}{#1}{#2}}
\newcommand{\modd}[1]{\,(\mathrm{mod}\, #1)}
\newcommand{\mo}{\mathrm{mo}}
\newcommand{\mmp}{\mathrm{mmp}}
\newcommand{\rmp}{\mathrm{rmp}}
\newcommand{\spread}{\mathrm{spread}}
\theoremstyle{theorem}
\newtheorem{theorem}{Theorem}
\newtheorem*{conjecture}{Conjecture}
\theoremstyle{definition}
\newtheorem*{example}{Example}
\newtheorem{remark}{Remark}
\def\rr{\mathbb{R}}
\def\nn{\mathbb{N}}
\def\cc{\mathbb{C}}
\begin{document}
\title{Some Linear Recurrences Motivated by Stern's Diatomic Array } 
\markright{Some Linear Recurrences}

\date{\today}

\author{Richard P. Stanley}

\maketitle

\begin{abstract}
We define a triangular array closely related to Stern's diatomic array
and show that for a fixed integer $r\geq 1$, the sum $u_r(n)$ of the
$r$th powers of the entries in row $n$ satisfy a linear recurrence
with constant coefficients. The proof technique yields a vast
generalization. In certain cases we can be more explicit about the
resulting linear recurrence.
\end{abstract}

\section{Introduction.}\label{sec1}
We first define an array of numbers analogous to Pascal's triangle (or
the arithmetic triangle). The rows will be numbered $0,1,\dots$. The
first row consists of a single 1, and every subsequent row begins and
ends with a 1, just like Pascal's triangle. We add two consecutive
numbers in row $n$ and place the sum in row $n+1$ to the right of the
first number and left of the second number, again just like Pascal's
triangle. However, we also bring down (copy) into row $n+1$ each entry
in row $n$, placing it directly below. The first four rows look like
   $$ \begin{array}{ccccccccccccccc}
     & & & & & & & 1\\ & & & 1 & & & & 1 & & & & 1\\ 
     & 1 & & 1 & & 2 & & 1 & & 2 & & 1 & & 1\\
     1 & 1 & 2 & 1 & 3 & 2 & 3 & 1 & 3 & 2 & 3 & 1 & 2 & 1 & 1\\
     & & & & & & & \vdots \end{array}. $$
We will call this array \emph{Stern's triangle}, though ``triangle''
is somewhat of a misnomer since the number of entries in each row
grows exponentially, not linearly.

Stern's triangle is closely related to a well-known array called
\emph{Stern's diatomic array} \cite{stern}. It has the same recursive
rules as Stern's triangle, but the first row consists of two 1's which
are brought down to form the first and last element of each row. It
looks like
   $$ \begin{array}{ccccccccccccccccc}
     1 &   &   &   &   &   &   &   &   &   &   &   &   &   &   &   & 1\\
     1 &   &   &   &   &   &   &   & 2 &   &   &   &   &   &   &   & 1\\
     1 &   &   &   & 3 &   &   &   & 2 &   &   &   & 3 &   &   &   & 1\\
     1 &   & 4 &   & 3 &   & 5 &   & 2 &   & 5 &   & 3 &   & 4 &   & 1\\ 
     1 & 5 & 4 & 7 & 3 & 8 & 5 & 7 & 2 & 7 & 5 & 8 & 3 & 7 & 4 & 5 & 1\\
       &   &   &   &   &   &   &   & \vdots \end{array}. $$


\begin{remark} \label{rmk:sdast}
Let $R_i$ denote the $i$th row of Stern's diatomic array, beginning
with row 0. Form the concatenation 
   $$ R_0R_1\cdots R_{n-2}R_{n-1}R_{n-1}R_{n-2}\cdots R_1R_0 $$
and then merge together the last 1 in each row with the first 1 in the
next row. We then obtain row $n$ of Stern's triangle. From this
observation almost any property of Stern's triangle can be carried
over straightforwardly to Stern's diatomic array and \emph{vice
  versa}, including the properties that we discuss below. See in
particular Remark~\ref{rmk:sba}.
\end{remark}

We prefer Stern's triangle to Stern's diatomic array because its
properties are more elegant and simple. In particular, let $\anbc nk$
denote the $k$th entry (beginning with $k=0$) in row $n$ of Stern's
triangle, in analogy with Pascal's triangle. Naturally we define
$\anbc nk=0$ if $k$ is ``out of range,'' i.e., if $k<0$ or
$k>2^{n+1}-2$. We then have the following ``Stern analogue'' of the
binomial theorem.

\begin{theorem} \label{thm:fnx}
Let $n\geq 1$. Then
  \beq F_n(x)\coloneqq \sum_{k\geq 0}\anbc nk x^k = 
      \prod_{i=0}^{n-1}\left( 1+x^{2^i}+x^{2\cdot 2^i}\right). 
    \label{eq:fnx} \eeq
\end{theorem}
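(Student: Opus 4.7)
My plan is to establish the simple recurrence
\[
F_{n+1}(x) = (1 + x + x^2)\, F_n(x^2)
\]
and then conclude by a short induction on $n$, starting from the empty-product base case $F_0(x) = 1$.

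To derive the recurrence, I first read off how row $n+1$ is assembled from row $n$. Adopting the convention that $\anbc{n}{k} = 0$ for out-of-range $k$, the combined ``bring down plus insert sums'' rule (including the pairs at the boundary, which produce the flanking $1$'s automatically since one summand is $0$) translates to
\[
\anbc{n+1}{2j+1} = \anbc{n}{j}, \qquad \anbc{n+1}{2j} = \anbc{n}{j-1} + \anbc{n}{j}
\]
for every integer $j$. The odd-index contributions to $F_{n+1}(x)$ sum to $\sum_j \anbc{n}{j}\, x^{2j+1} = x\, F_n(x^2)$, and the even-index contributions sum to $\sum_j \anbc{n}{j-1}\, x^{2j} + \sum_j \anbc{n}{j}\, x^{2j} = (1+x^2)\, F_n(x^2)$. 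Adding these yields the recurrence.

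With the recurrence in hand, the induction is immediate: assuming the product formula for $F_n(x)$, substituting $x \mapsto x^2$ shifts indices to give $F_n(x^2) = \prod_{j=1}^{n}(1 + x^{2^j} + x^{2^{j+1}})$, and multiplying by $1 + x + x^2$ reinstates the $j = 0$ factor, producing the claimed formula for $F_{n+1}(x)$. The one step that deserves real care is the first: pinning down the precise position-indexing of row $n+1$ relative to row $n$, so that the boundary $1$'s emerge naturally from zero-padding rather than needing to be tracked by hand. Once that bookkeeping is correct, the generating-function calculation and the induction are routine.
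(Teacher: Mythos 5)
Your proof is correct and follows essentially the same route as the paper: establish $F_{n+1}(x)=(1+x+x^2)F_n(x^2)$ from the bring-down/insert-sum rule (the paper phrases this as $xF_n(x^2)$ accounting for brought-down entries and $(1+x^2)F_n(x^2)$ for the pairwise sums) and conclude by induction. Your explicit index bookkeeping $\anbc{n+1}{2j+1}=\anbc{n}{j}$, $\anbc{n+1}{2j}=\anbc{n}{j-1}+\anbc{n}{j}$ is accurate and just makes the paper's one-line justification more precise.
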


\begin{proof}
This formula is immediate from the recursive definition of Stern's
triangle. The result is clearly true for $n=1$, so we need to show
that $F_{n+1}(x)=(1+x+x^2)F_n(x^2)$, $n\geq 1$. The product
$xF_n(x^2)$ accounts for the entries in row $n+1$ that are brought
down from row $n$, while $(1+x^2)F_n(x^2)$ accounts for the entries in
row $n+1$ that are the sum of two consecutive entries in row $n$.
\end{proof}

Note that it follows immediately from Theorem~\ref{thm:fnx} that
$\anbc nk$ has the following combinatorial interpretation: it is the
number of partitions of $k$ (as defined, e.g., in
\cite{andrews} or \cite[{\S}1.8]{ec1}) into the parts $1,2,4,\dots,
2^{n-1}$, where each part may be used at most twice.

Theorem~\ref{thm:fnx} implies that row $n$ of Stern's triangle
approaches a limiting sequence $b_0,b_1,\dots$ as $n\to\infty$,
meaning that for all $k\geq 0$ we have $\anbc nk=b_k$ for $n$
sufficiently large (depending on $k$). Moreover, letting $n\to\infty$
in equation~\eqref{eq:fnx} shows that
  $$ \sum_{k\geq 0} b_kx^k = \prod_{i=0}^\infty 
      \left( 1+x^{2^i}+x^{2\cdot 2^i}\right). $$
The sequence $b_0,b_1,\dots$ is the famous \emph{Stern's diatomic
  sequence}, with many remarkable properties. (Sometimes
$0,b_0,b_1,\dots$ is called Stern's diatomic sequence.) Perhaps the
most amazing property, though irrelevant here, is that every positive
rational number appears exactly once as a ratio $b_k/b_{k+1}$, and
that this fraction is always in lowest terms. Northshield
\cite{northshield} has a nice survey. 

\section{Sums of products of powers.} \label{sec2}
In this section we will consider sums $\sum_{k\geq 0} \anbc
nk^r$, where $r\in\nn=\{0,1,\dots\}$, and more
generally 
 $$ u_\alpha(n)\coloneqq\sum_{k\geq 0}\anbc
   nk^{\alpha_0}\anbc{n}{k+1}^{\alpha_1} 
   \cdots \anbc{n}{k+m-1}^{\alpha_{m-1}}, $$ 
where $\alpha=(\alpha_0,\dots,\alpha_{m-1})\in\nn^m$. For
notational simplicity we write
$u_{\alpha_0,\dots,\alpha_{m-1}}(n)$ as short for
$u_{(\alpha_0,\dots,\alpha_{m-1})}(n)$.

We will use the following terminology concerning linear recurrences
with constant coefficients. Suppose that $f\colon\nn\to\cc$ satisfies
such a recurrence for $n\geq n_0$. Thus
there are complex numbers $c_1,\dots,c_\ell$, with $c_\ell\neq 0$,
such that 
  $$ f(n+\ell)+c_1f(n+\ell-1)+c_2f(n+\ell-2)+\cdots+c_\ell
f(n)=0,\ \ n\geq n_0. $$ 
Equivalently, $f(n)$ has a generating function of the form
  \beq \sum_{n\geq 0}f(n)x^n = \frac{P(x)}{1+c_1x+\cdots+c_\ell
    x^\ell}, \label{eq:ratgf} \eeq
where $P(x)\in\cc[x]$ and $\deg P(x)<\ell+n_0$. We say that $f(n)$ has
a \emph{rational 
  generating function} with \emph{characteristic polynomial}
  $$ R(x)= x^\ell+c_1 x^{\ell-1}+\cdots+c_\ell. $$
Thus if $Q(x)$ denotes the denominator of the right-hand side of
equation~\eqref{eq:ratgf}, then $R(x)=x^\ell Q(1/x)$. For further
information on linear recurrences with constant coefficients and
rational generating functions, see \cite[Chapter~4]{ec1}.

Clearly the number of entries in row $n$ of Stern's triangle is
$2^{n+1}-1$, so $u_0(n)=2^{n+1}-1$. It is also clear that
$u_1(n)=3^n$, e.g., by putting $x=1$ in equation~\eqref{eq:fnx} or
directly from the recursive structure of Stern's triangle, since each
entry in row $n$ contributes to three entries of row $n+1$.

Thus let us turn to $u_2(n)$. The first few values (beginning at
$n=0$) are  1, 3, 13, 59, 269, 1227, 5597, 25531,$\dots$. By various 
methods, such as trial-and-error, using the \emph{Online
  Encyclopedia of Integer Sequences} (OEIS) , or using the Maple
package gfun, we are led to conjecture that
  \beq u_2(n+2) -5u_2(n+1)+2u_2(n)=0,\ \ n\geq 0. \label{eq:u2n} \eeq
Equivalently (using the initial values $u_2(0)=1$ and $u_2(1)=3$), 
   $$ \sum_{n\geq 0} u_2(n)x^n = \frac{1-2x}{1-5x+2x^2}. $$
Note the difference from Pascal's triangle, where $\sum_{k\geq 0}
\binom nk^2 =\binom{2n}{n}$ and $\sum_{n\geq
  0}\binom{2n}{n}x^n=1/\sqrt{1-4x}$. For Stern's triangle the
generating function is rational, while for Pascal's triangle it is
only algebraic.

How do we prove the recurrence~\eqref{eq:u2n}? From the definition of
Stern's triangle we have
  \beas u_2(n+1) & = & \sum_k \left(\anbc nk+\anbc{n}{k+1}\right)^2 +
     \sum_k \anbc nk^2\\ & = &
      3u_2(n) + 2\sum_k \anbc nk\anbc{n}{k+1}\\ & = &
      3u_2(n)+2u_{1,1}(n). \eeas
We therefore need to play a similar game with $u_{1,1}$:
   \beas u_{1,1}(n+1) & = & \sum_k \left(\anbc{n}{k-1}+
     \anbc nk
    \right)\anbc nk\\ & & \ \  + \sum_k \anbc nk\left(\anbc
     nk+\anbc{n}{k+1}\right)\\ & = & 2u_2(n)+2u_{1,1}(n). \eeas
Hence we get the matrix recurrence
    \beq A\left[ \begin{array}{c}u_2(n)\\ u_{1,1}(n)\end{array} \right]
     = \left[ \begin{array}{c}u_2(n+1)\\ u_{1,1}(n+1)\end{array}
       \right], \label{eq:matrec} \eeq
where $A= \left[ \begin{array}{cc} 3 & 2\\ 2 & 2
    \end{array} \right]$.
This is a standard type of simultaneous linear recurrence. To solve
it, we have 
   $$ A^n\left[ \begin{array}{c}u_2(1)\\ u_{1,1}(1)\end{array}
    \right] = \left[ \begin{array}{c}u_2(n)\\ u_{1,1}(n)\end{array}
    \right].  $$      
The minimum polynomial of $A$, i.e., the (nonzero) monic
polynomial $M(x)$ of least degree satisfying $M(A)=0$, is easily
computed to be $x^2-5x+2$.  Hence
 \beas \left[\begin{array}{c} 0\\ 0\end{array} \right] & = & 
   A^n(A^2-5A+2)
    \left[ \begin{array}{c}u_2(1)\\ u_{1,1}(1)\end{array}
    \right]\\ & = & (A^{n+2}-5A^{n+1}+2A^n)
   \left[ \begin{array}{c}u_2(1)\\ u_{1,1}(1)\end{array}
    \right]\\ & = &
   \left[ \begin{array}{c}u_2(n+2)\\ u_{1,1}(n+2)\end{array} 
    \right] -5\left[ \begin{array}{c}u_2(n+1)\\ u_{1,1}(n+1)\end{array}
    \right]+2\left[ \begin{array}{c}u_2(n)\\ u_{1,1}(n)\end{array}
    \right], \eeas
so we get $u_2(n+2)-5u_2(n+1)+u_2(n)=0$, as well as the same recurrence
for $u_{1,1}(n)$.

Let us apply this procedure to $u_3(n)=\sum_k \anbc nk^3$. We get
  \beas u_3(n+1) & = & \sum_k \left(\anbc nk+\anbc{n}{k+1}\right)^3 +
     \sum_k \anbc nk^3\\ & = &
      3u_3(n)+3u_{2,1}(n)+3u_{1,2}(n). \eeas
Because of the symmetry of Stern's triangle about a vertical axis, we
have 
  $$ u_{\alpha_0,\alpha_1,\dots,\alpha_{m-1}}(n)=
    u_{\alpha_{m-1},\dots,\alpha_1,\alpha_0}(n), $$
so in particular $u_{2,1}(n)=u_{1,2}(n)$. Thus 
  $$ u_3(n+1) = 3u_3(n)+6u_{2,1}(n). $$
Similarly,
  \beas u_{2,1}(n+1) & = &  \sum_k \left(\anbc{n}{k-1}+
     \anbc nk
    \right)^2\anbc nk\\ & & \ \  + \sum_k \anbc nk\left(\anbc
     nk+\anbc{n}{k+1}\right)^2\\ & = & 2u_3(n)+4u_{2,1}(n). \eeas
The matrix $\left[ \begin{array}{cc} 3 & 6\\ 2 & 4\end{array}\right]$
has minimum polynomial $x(x-7)$, so we get the recurrence
  $$ u_3(n+1) = 7 u_3(n),\ \ n\geq 1, $$
and similarly for $u_{2,1}(n)$. (The recurrence is not valid at $n=0$
since the minimum polynomial is $x(x-7)$, not $x-7$.) In fact, we have
the surprisingly simple formulas
  $$ u_3(n)=3\cdot 7^{n-1},\ \ u_{2,1}(n)=2\cdot 7^{n-1},\ \ n\geq
   1. $$
Here we see an even larger divergence from the behavior of Pascal's
triangle---the generating function for $f(n)\coloneqq \sum_{k\geq 0}
\binom nk^3$ is not even algebraic. The best we can say is that it is
D-finite \cite[{\S}6.4]{ec2}, meaning that $f(n)$ satisfies a linear
recurrence with polynomial coefficients, namely,
   $$ (n+2)^2f(n+2)-(7n^2+21n+16)f(n+1)-8(n+1)^2f(n) =0,\
  n\geq 0. $$
For more on the sums $\sum_k \binom nk^r$, see
\cite[Exercise~6.54]{ec2} and the references given there.

We have shown that $u_2(n), u_{1,1}(n), u_3(n)$, and $u_{2,1}(n)$ have
rational generating functions. The same technique yields the following
general result.

\begin{theorem} \label{thm:ualpharat}
For any $\alpha=(\alpha_0,\dots,\alpha_{m-1})\in\nn^m$, the function
$u_\alpha(n)$ has a rational generating function.
\end{theorem}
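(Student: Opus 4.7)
The plan is to mimic, uniformly in $\alpha$, the calculation carried out above for $u_2$, $u_{1,1}$, $u_3$, and $u_{2,1}$: I will package $u_\alpha(n)$ together with a finite family of companion quantities $u_\beta(n)$ into a simultaneous linear recurrence of the form $A\bmu(n)=\bmu(n+1)$ with constant matrix $A$, and then extract a constant-coefficient scalar recurrence for $u_\alpha(n)$ via the minimum polynomial of $A$. By the discussion following~\eqref{eq:ratgf}, this is equivalent to $u_\alpha(n)$ having a rational generating function.

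\textbf{Building the closed system.} Let $r=\alpha_0+\cdots+\alpha_{m-1}$, set $L=\max(m,2)$, and let $S$ be the (finite) set of all tuples $\beta=(\beta_0,\ldots,\beta_{m'-1})\in\nn^{m'}$ with $1\le m'\le L$ and $\beta_0+\cdots+\beta_{m'-1}=r$. For each $\beta\in S$, I expand $u_\beta(n+1)$ by splitting the outer summation according to the parity of the starting index; using the rules $\anbc{n+1}{2k+1}=\anbc{n}{k}$ and $\anbc{n+1}{2k}=\anbc{n}{k-1}+\anbc{n}{k}$, each parity class presents the window of $m'$ consecutive row-$(n+1)$ entries as a fixed alternating pattern of ``brought-down'' entries $\anbc{n}{j}$ and ``consecutive-sum'' entries $\anbc{n}{j-1}+\anbc{n}{j}$. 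Expanding every $(\anbc{n}{j-1}+\anbc{n}{j})^{\beta_i}$ by the binomial theorem and collecting by the exponent profile on the underlying row-$n$ entries writes $u_\beta(n+1)$ as a nonnegative-integer linear combination of terms $u_\gamma(n)$, where $\sum\gamma_i=r$ (total weight is preserved by each binomial expansion) and $\gamma$ has length at most $\lceil m'/2\rceil+1\le L$; thus every such $\gamma$ lies in $S$. This yields a matrix identity
\[ A\,\bmu(n)=\bmu(n+1),\qquad n\ge 1, \]
where $\bmu(n)=\bigl(u_\beta(n)\bigr)_{\beta\in S}$ is a column vector and $A$ is a fixed $|S|\times|S|$ matrix with nonnegative integer entries.

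\textbf{Finishing and the main obstacle.} Let $M(x)=x^d+c_1x^{d-1}+\cdots+c_d$ be the minimum polynomial of $A$. Applying $M(A)=0$ to $\bmu(n)$ gives $\bmu(n+d)+c_1\bmu(n+d-1)+\cdots+c_d\bmu(n)=0$ for all $n\ge 1$, and reading off the $\alpha$-coordinate produces the required constant-coefficient linear recurrence for $u_\alpha(n)$. The real content of the proof is the closure inequality $\lceil m'/2\rceil+1\le L$ whenever $m'\le L$, together with the preservation of total weight under binomial expansion; both are transparent. I expect the main obstacle to be purely bookkeeping: the two parity cases in the expansion, the boundary convention $\anbc{n}{k}=0$ for out-of-range $k$ (with $\anbc{n}{k}^0=1$ always, so that leading or trailing zeros in a tuple give $u_\beta$'s already accounted for by shifts), and tracking the binomial coefficients that appear---none of which is conceptually difficult.
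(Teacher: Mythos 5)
Your proposal is correct and follows essentially the same route as the paper: split $u_\beta(n+1)$ by parity into the two interleaved patterns of brought-down and consecutive-sum entries, note that total weight is preserved and that the resulting window length is at most $\lceil m'/2\rceil+1$ (the paper's $2+\lfloor(m'-1)/2\rfloor$), so the system closes on a finite set of $u_\gamma$'s, and then extract a scalar recurrence via the minimum polynomial of the transition matrix. The only differences are cosmetic bookkeeping (your set $S$ allows zero entries and ignores the reversal symmetry, which only affects the matrix size, not the conclusion).
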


\begin{proof}
We have
  \bea u_\alpha(n+1) & = & \sum_k \anbc nk^{\alpha_0}
     \left( \anbc nk+\anbc{n}{k+1}\right)^{\alpha_1}
     \anbc{n}{k+1}^{\alpha_2}\\ & & \ \ \
     \cdot\left( \anbc{n}{k+1}+\anbc{n}{k+2}\right)^{\alpha_3}
    \cdots \nonumber\\ & & \ \ \
    + \sum_k \left( \anbc nk+\anbc{n}{k+1}\right)^{\alpha_0}
       \anbc{n}{k+1}^{\alpha_1} \nonumber\\ & & \ \ \
      \cdot\left( \anbc{n}{k+1}+\anbc{n}{k+2}\right)^{\alpha_2}
      \anbc{n}{k+2}^{\alpha_3}\cdots. \label{eq:uarec} \eea
When the summands are expanded, we obtain an expression for
$u_\alpha(n+1)$ as a linear combination of $u_\beta$'s. Define the
\emph{spread} of $u_\alpha$, denoted $\spread(u_\alpha)$, to be largest
length (number of terms) of 
any $\beta$ for which $u_\beta(n)$ appears in this expression for
$u_\alpha(n+1)$. For instance, from $u_2(n+1)=3u_2(n)+2u_{1,1}(n)$ we
see that $\spread(u_2)=2$, coming from $\beta=(1,1)$ of length two. 

From equation~\eqref{eq:uarec} we see that that when $u_\alpha(n+1)$
is written as a linear combination of $u_\beta$'s, the indices $\beta$
that occur satisfy (a) $|\alpha|\coloneqq \sum \alpha_i=\sum
\beta_i=|\beta|$, and (b) $\spread(u_\alpha)=2+\lfloor \frac
12(\ell-1)\rfloor$, where $\alpha$ has length 
$\ell$. Since $2+\lfloor \frac 12(\ell-1)\rfloor\leq \ell$ for
$\ell\geq 2$, it 
follows that for $\alpha=(r)$ we will obtain a (finite) matrix
recurrence like 
equation~\eqref{eq:matrec}, where one of the functions is
$u_\alpha(n)$. The size (number of rows or columns) of the matrix is
$1+\lfloor r/2\rfloor$, the number of weakly 
decreasing sequences of positive integers with sum $r$ and length
1 or 2.\ \ Similarly, when $\alpha$ has length $\ell\geq
2$, then the size of the matrix will not exceed the number of
equivalence classes of sequences of nonnegative integers of length at
most $\ell$ summing to $|\alpha|$, where a sequence $\alpha$ is
equivalent to its reverse. The point is that there are only finitely
many such equivalence classes, so we obtain a finite matrix equation. 
By the same argument used to show $u_2(n+2)-5u_2(n+1)+u_2(n)=0$, we
get that $u_\alpha(n)$ has a rational generating function.
\end{proof}

Here are the characteristic polynomials of the recurrences satisfied
by $u_r(n)$ for $n$ sufficiently large (denoted $n\gg 0$), for $1\leq
r\leq 10$: 
  $$ \begin{array}{c} x-3,\\ x^2-5x+2,\\ x-7,\\ (x+1)(x^2-11x+2),\\
     x^2-14x-47,\\ x^4-20x^3-161x^2-40x+4,\\ x^3-29x^2-485x-327,\\
     (x+1)(x^4-44x^3-1313x^2-88x+4),\\ x^3-65x^2-3653x-3843,\\
     (x+1)(x^4-100x^3-9601x^2-200x+4). \end{array} $$

We can say quite a bit more about the recurrence satisfied by
$u_\alpha(n)$. We can assume that
$\alpha=(\alpha_0,\dots,\alpha_{m-1})\in\nn^m$ with $\alpha_0>0$ and
$\alpha_{m-1}>0$. We then write $m=\ell(\alpha)$, the length of
$\alpha$. 

Write $\mmp(\alpha)$ (for ``matrix minimum polynomial'') for the
minimum polynomial $M(A_\alpha)$ of the matrix $A_\alpha$ used to
compute $u_\alpha(n)$ by the method above. Write $\rmp(\alpha)$ (for
``recurrence minimum polynomial'') for the characteristic polynomial
of the linear recurrence with constant coefficients \emph{of least
  degree} satisfied by $u_\alpha(n)$ for $n\gg 0$. Note that the
proof of Theorem~\ref{thm:ualpharat} shows that $\mmp(\alpha)$ is
divisible by $\rmp(\alpha)$.

Linearly order all sequences $\alpha=(\alpha_0,\dots,\alpha_{m-1})$
with $\alpha_0>0$ and $\alpha_{m-1}>0$, and where $|\alpha|=r$ is
fixed, in such a way that the following conditions are satisfied: (a)
if $\ell(\alpha)<\ell(\beta)$ then $\alpha<\beta$; and (b) if
$\alpha=(\alpha_1,\alpha_2,\alpha_3)$ and
$\beta=(\beta_1,\beta_2,\beta_3)$ (with $|\alpha|=|\beta|$), then
define $\alpha\leq \beta$ if $\alpha_1\geq \beta_1$,
$\alpha_2\leq \beta_2$, and $\alpha_3\geq \beta_3$. (We do not specify
the ordering when $\ell(\alpha)=\ell(\beta)\neq 3$ except for condition
(a).)

Order the rows and columns of $A_\alpha$ using the order defined in
the previous paragraph. It is easy to check that $A_\alpha$ is block
lower-triangular. The first block is $A_r$, where $r=|\alpha|$. This
corresponds to rows and columns indexed by $\beta$ with
$\ell(\beta)\leq 2$. For $\ell(\beta)=3$, the blocks are $1\times 1$
with entry 1. All the other blocks are $1\times 1$ with entry 0.

\begin{example}
For $\alpha=(1,1,1,1)$ we use the ordering (writing
e.g.\ 121 for $(1,2,1)$) 
   $$ 4<31<22<121<211<1111. $$
(No other $\beta$'s occur in computing the recurrence satisfied by
$u_{1,1,1,1}$.) We get the matrix
  $$ A_{(1,1,1,1)} =\left[ \begin{array}{cccccc}
     3 & 8 & 6 & 0 & 0 & 0\\ 2 & 5 &3 & 0 & 0 & 0\\
     2 & 4 & 2 & 0 & 0 & 0\\ 1 & 4 & 2 & 1 & 0 & 0\\
     1 & 3 & 1 & 2 & 1 & 0\\ 0 & 2 & 2 & 2 & 2 & 0 \end{array}
     \right]. $$
In particular, the minimum polynomial of $A_{(1,1,1,1)}$ is
$x(x+1)(2x^2-11x+1)(x-1)^2$. The factor $(x+1)(2x^2-11x+1)$ is the
minimum polynomial of the block $A_4$. The characteristic polynomial
$\rmp(1,1,1,1)$ of the least order recurrence satisfied by
$u_{1,1,1,1}(n)$ for $n\gg 0$ turns out to be
$(x-1)^2(x+1)(2x^2-11x+1)$. 
\end{example}

The above argument yields the following theorem.

\begin{theorem}
Let $\alpha\in \nn^m$ and $|\alpha|=r$. Then the polynomial
$\mmp(\alpha)$ has the form $x^{w_\alpha}(x-1)^{z_\alpha}\mmp(r)$
for some $w_\alpha, z_\alpha\in\nn$. 
\end{theorem}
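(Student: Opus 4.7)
The plan is to exploit the block lower-triangular structure of $A_\alpha$ established just above. Let $V$ denote the ambient space and write $V = V_r \oplus V_{\mathrm{lo}}$, where $V_r$ is spanned by the coordinates indexed by $\beta$ with $\ell(\beta) \leq 2$ and $V_{\mathrm{lo}}$ by those with $\ell(\beta) \geq 3$. Block lower-triangularity gives that $V_{\mathrm{lo}}$ is $A_\alpha$-invariant, that the induced action on the quotient $V/V_{\mathrm{lo}} \cong V_r$ coincides with $A_r$, and that $A_\alpha|_{V_{\mathrm{lo}}}$ is itself block lower-triangular with diagonal entries in $\{0,1\}$, so its spectrum is contained in $\{0,1\}$.

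First I would handle the easy divisibility $\mmp(r) \mid \mmp(\alpha)$. For any polynomial $p(x)$, the matrix $p(A_\alpha)$ is block lower-triangular with $p(A_r)$ as its top-left diagonal block; thus $p(A_\alpha)=0$ forces $p(A_r)=0$. Moreover, the characteristic polynomial of $A_\alpha$ factors as $\chi_{A_r}(x)\cdot x^a(x-1)^b$ for some $a,b\ge 0$, so every root of $\mmp(\alpha)$ is either a root of $\mmp(r)$ or lies in $\{0,1\}$.

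The main step is to prove that for every eigenvalue $\lambda\notin\{0,1\}$, the multiplicity of $(x-\lambda)$ in $\mmp(\alpha)$ equals its multiplicity in $\mmp(r)$. Viewing $V$ as a $\cc[x]$-module via $A_\alpha$, I would show that the projection $\pi\colon V\to V/V_{\mathrm{lo}}$ restricts to a $\cc[x]$-linear isomorphism $G_\lambda(A_\alpha)\to G_\lambda(A_r)$ on generalized eigenspaces for every such $\lambda$. Injectivity is immediate: $V_{\mathrm{lo}}\cap G_\lambda(A_\alpha)\subseteq G_\lambda(A_\alpha|_{V_{\mathrm{lo}}})=0$ since $\lambda$ is not in the spectrum of $A_\alpha|_{V_{\mathrm{lo}}}$. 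For surjectivity, given $\bar v\in G_\lambda(A_r)$, take any lift $v_0\in V$; then $(A_\alpha-\lambda I)^N v_0\in V_{\mathrm{lo}}$ for $N$ large, and since $(A_\alpha-\lambda I)^N$ is invertible on $V_{\mathrm{lo}}$ (as $\lambda$ avoids the spectrum of $A_\alpha|_{V_{\mathrm{lo}}}$), one can correct $v_0$ by a unique element of $V_{\mathrm{lo}}$ to land in $G_\lambda(A_\alpha)$. Because $\pi$ intertwines $A_\alpha$ with $A_r$, the isomorphism preserves Jordan structure at $\lambda$, hence the nilpotency indices agree.

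Combining, if we write $\mmp(\alpha)=\prod_\lambda(x-\lambda)^{j_\lambda}$ and $\mmp(r)=\prod_\lambda(x-\lambda)^{i_\lambda}$, the previous step gives $j_\lambda=i_\lambda$ for $\lambda\notin\{0,1\}$, while $\mmp(r)\mid\mmp(\alpha)$ gives $j_0\ge i_0$ and $j_1\ge i_1$. Therefore $\mmp(\alpha)=x^{w_\alpha}(x-1)^{z_\alpha}\mmp(r)$ with $w_\alpha=j_0-i_0\ge 0$ and $z_\alpha=j_1-i_1\ge 0$, as required. I expect the main technical point to be the lifting step for surjectivity of $\pi$ on generalized eigenspaces, which crucially uses the restriction of $A_\alpha$ to $V_{\mathrm{lo}}$ having spectrum in $\{0,1\}$; everything else is a direct consequence of the block-triangular set-up already established.
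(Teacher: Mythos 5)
Your proposal is correct and follows the same route as the paper: the paper's proof is precisely the observation that $A_\alpha$ is block lower-triangular with leading diagonal block $A_r$ and all remaining diagonal blocks $1\times 1$ with entry $1$ (for $\ell(\beta)=3$) or $0$, from which the form $x^{w_\alpha}(x-1)^{z_\alpha}\mmp(r)$ of $\mmp(\alpha)$ is read off. You simply supply the linear-algebra details (invariance of the long-index subspace, the generalized-eigenspace comparison for eigenvalues outside $\{0,1\}$, and divisibility $\mmp(r)\mid\mmp(\alpha)$) that the paper leaves implicit, and these details are right.
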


We have not considered whether there is a ``nice'' description of the
integers $w_\alpha$ and $z_\alpha$, nor the largest power of $x-1$
dividing $\rmp(\alpha)$.


\section{A conjecture on the order of the recurrence.}
Can we say more about the actual recurrence satisfied by
$u_\alpha(n)$? We have not investigated this question systematically,
but we do have a conjecture about the order of the recurrence and some
special properties of the characteristic 
polynomial. For instance, is it just an ``accident'' that the matrix
$A_3=\left[ \begin{array}{cc} 3 & 6\\ 2 &
    4\end{array}\right]$ has a zero eigenvalue, thereby reducing the
order of the recurrence from two to one? Or that the
polynomials $\rmp(4)$, $\rmp(8)$, and $\rmp(10)$,  are divisible by
$x+1$? 

We noted before that the matrix $A_r$ has size $\lceil
(r+1)/2\rceil$ (the number of weakly decreasing sequences of
positive integers with sum $r$ and length 1 or 2)
so $u_r(n)$ satisfies a linear recurrence with constant coefficients
of this order. However, on
the basis of empirical evidence ($r\leq 125$), we conjecture that the
least order of such a recurrence is actually $\frac
13r+O(1)$. In fact, we have the following more precise
conjecture. Write $[a_0,\dots,a_{q-1}]_q$ for the periodic function
$f\colon \nn\to\rr$ satisfying $f(n)=a_i$ if $n\equiv
i\modd{q}$.  Let $e_r(\theta)$ denote the number of eigenvalues of
$A_r$ equal to $\theta$.  Recall that an eigenvalue $\theta$ of a
matrix $A$ is \emph{semisimple} if the minimum polynomial of
$A$ is not divisible by $(x-\theta)^2$. Equivalently, all the Jordan
blocks with eigenvalue $\theta$ of the Jordan canonical form of $A$
have size one. 

\begin{conjecture} \label{conj:order}
\be\item[(a)] We have
  $$ e_{2s-1}(0) = \frac 13 s+\left[0,-\frac 13,\frac 13\right]_3, $$
 and the eigenvalue 0 is semisimple. There are no other multiple
 eigenvalues, and 1 is not an eigenvalue. 
  \item[(b)] We have
   \beas e_{2s}(1) & = & \frac 16s +\left[-1,-\frac 16, -\frac 13, -\frac
     12, -\frac 23, \frac 16\right]_6\\
   e_{2s}(-1) & = & e_{2s+6}(1). \eeas
The eigenvalues $1$ and $-1$ are semisimple, and there are no other
multiple eigenvalues. 
 \ee
\end{conjecture}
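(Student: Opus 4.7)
The plan is to reinterpret the matrix $A_r$ as a linear operator on a space of palindromic polynomials and then exploit a hidden dihedral symmetry. First I will introduce the generating polynomial
\[ P_r(t,n)\coloneqq\sum_{j=0}^{r}\binom{r}{j}u_{(r-j,j)}(n)\,t^{j}, \]
which, by virtue of $u_{(a,b)}=u_{(b,a)}$, lies in the space $V^{+}_r$ of polynomials $p$ of degree at most $r$ satisfying $p(t)=t^{r}p(1/t)$. Directly from the recursive definition of Stern's triangle one obtains the functional equation
\[ P_r(t,n+1)=(1+t)^{r}P_r\!\left(\tfrac{t}{1+t},n\right)+P_r(1+t,n), \]
so that $A_r$ is conjugate to the restriction to $V^{+}_r$ of the operator $Tp(t)=(1+t)^{r}p(t/(1+t))+p(1+t)$.

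Next I factor $T=\psi(\tilde\tau+I)$, where $\psi p(t)=p(t+1)$ and $\tilde\tau p(t)=t^{r}p((t-1)/t)$. Since $\psi$ is invertible, $\ker T=\ker(\tilde\tau+I)$. Identifying $V_r$ with the standard $(r+1)$-dimensional $SL_2(\cc)$-irrep $\mathrm{Sym}^{r}(\cc^{2})$, both $\tilde\tau$ and the palindromic involution $Jp(t)=t^{r}p(1/t)$ arise from explicit $GL_2(\zz)$-matrices; one verifies $\tilde\tau^{3}=(-1)^{r}I$ and $J\tilde\tau J=\tilde\tau^{-1}$. Hence $\langle\tilde\tau,J\rangle$ acts on $V_r$ as the dihedral group $D_{12}$ when $r$ is odd and as $S_{3}\cong D_{6}$ when $r$ is even.

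For Part~(a) (odd $r=2s-1$), the kernel $\ker(\tilde\tau+I)$ is automatically $J$-stable (since $\tilde\tau p=-p$ forces $\tilde\tau^{-1}p=-p$), so $V^{+}_r\cap\ker(\tilde\tau+I)$ is exactly the isotypic component in $V_r$ of the one-dimensional $D_{12}$-character $\chi$ with $\chi(J)=1$, $\chi(\tilde\tau)=-1$. Character orthogonality gives
\[ e_{2s-1}(0)=\tfrac{1}{6}\bigl((r+1)-\chi_{V_r}(\tilde\tau)+\chi_{V_r}(\tilde\tau^{2})\bigr), \]
and the traces $\chi_{V_r}(\tilde\tau^{k})=\sum_{j=0}^{r}\zeta_{6}^{k(r-2j)}$ depend only on $r\bmod 6$, producing precisely the conjectured value $s/3+[0,-1/3,1/3]_{3}$. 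The remaining Part~(a) assertions---semisimplicity of $0$, non-occurrence of $1$ as an eigenvalue, and absence of other multiple eigenvalues---do not follow from the group action alone: semisimplicity reduces to showing $(\tilde\tau+I)(V^{+}_r)\cap\psi^{-1}\ker(\tilde\tau+I)=\{0\}$, and the absence of $1$ as an eigenvalue reduces to $\det(T-I)\neq 0$ on $V^{+}_r$, both of which I would approach by exploiting the unipotency of $\psi^{\pm 1}$ against the known spectrum $\{0,1+\zeta_{6},1+\bar\zeta_{6}\}$ of $\tilde\tau+I$.

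The main obstacle will be Part~(b), the even case. There the equations $Tp=\pm p$ become $(\tilde\tau+I\mp\psi^{-1})p=0$; because $\psi^{\pm 1}$ is unipotent and lies outside $\langle\tilde\tau,J\rangle\cong S_{3}$, no purely character-theoretic reduction is available. I would try two routes: first, searching for explicit eigenvectors in a Chebyshev-like family, motivated by the observation that for $r=4$ the $(-1)$-eigenvector $p_{4}(t)=t^{4}-4t^{2}+1$ satisfies $p_{4}(2\cos\theta)=2\cos 4\theta-1$; second, a finer $S_{3}$-equivariant analysis of $T$, decomposing $V_r$ into its $S_{3}$-isotypic pieces and studying the $T$-action block by block, which ought to yield dimensions periodic in $r\bmod 6$, exactly as in Part~(a).
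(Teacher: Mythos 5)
You should be clear at the outset that the paper does not prove this statement at all: it is stated as a conjecture, and the paper only reports partial progress by Speyer, namely that the conjectured values of $e_{2s-1}(0)$ and $e_{2s}(\pm 1)$ are lower bounds for the true multiplicities, and that $A_r$ can be conjugated by a diagonal matrix to a symmetric matrix, which gives semisimplicity (and reality) of all eigenvalues. Your framework is essentially the same one the paper attributes to Speyer: the unsymmetrized matrix $B_r$ is the operator $f(x,y)\mapsto f(x+y,y)+f(x,x+y)$ on binary forms of degree $r$, your $T$ is its dehomogenization, and $A_r$ is its restriction to the $J$-symmetric subspace. That setup is sound, and the factorization $T=\psi(\tilde\tau+I)$ with $\psi$ invertible is a reasonable way to attack the kernel.

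However, what you have written is a plan rather than a proof, and the gaps are exactly at the points the conjecture is hard. First, your character-theoretic count computes $\dim\ker(A_{2s-1})$, the geometric multiplicity of $0$; the quantity $e_{2s-1}(0)$ is the algebraic multiplicity, so without the semisimplicity of $0$ (which you defer) you recover only the lower bound that Speyer already established. Moreover, your displayed formula $\tfrac16\bigl((r+1)-\chi_{V_r}(\tilde\tau)+\chi_{V_r}(\tilde\tau^2)\bigr)$ is half the dimension of the full $(-1)$-eigenspace of $\tilde\tau$; the dihedral multiplicity formula also involves the reflection classes $\tilde\tau^kJ$, and you must show their traces contribute nothing (equivalently, that $J$ has trace zero on $\ker(\tilde\tau+I)$) --- plausible, but not automatic and not argued. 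Second, the remaining assertions of part (a) (semisimplicity of $0$, that $1$ is not an eigenvalue, that there are no other multiple eigenvalues) and all of part (b) are explicitly left as ``routes I would try.'' Note that part (b) is genuinely harder in your framework, as you acknowledge: the equations $Tp=\pm p$ mix $\psi$ with $\tilde\tau$, so no purely group-theoretic reduction applies, and ``no other multiple eigenvalues'' is a simplicity statement that does not follow from semisimplicity or from any symmetry you have introduced; even Speyer's symmetric-conjugation argument yields only semisimplicity, which is why the paper leaves the conjecture open. If you want to close the semisimplicity part, the efficient route is the one the paper reports: exhibit the diagonal conjugation (equivalently, a $\phi$-invariant inner product on the symmetric subspace) making $A_r$ symmetric; your proposed condition $(\tilde\tau+I)(V^{+}_r)\cap\psi^{-1}\ker(\tilde\tau+I)=\{0\}$ is undeveloped and in any case addresses only the eigenvalue $0$. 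As it stands, your proposal reproduces (a version of) the known lower-bound argument for part (a) and leaves the rest of the statement unproved.
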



Let $\mo(r)$ be the minimum order of a linear recurrence with
constant coefficients satisfied by $u_r(n)$ for $n\gg 0$. 
Conjecture~\ref{conj:order} reduces the ``naive'' bound
$\mo(r)\leq \lceil r/2\rceil$ to $\mo(r)\leq \lceil
r/2\rceil-e_r(0)$ when $r$ is odd, and 
  $$ \mo(r) \leq \left\lceil \frac r2\right\rceil  
     - \max\{0,e_r(1)-1\}- \max\{0,e_r(-1)-1\} $$
when $r$ is even. However, it appears that the eigenvalue 1 of
$A_{2s}$ is always superfluous, that is, $x-1$ is not a factor of the
characteristic polynomial $\rmp_{2s}(x)$. This will lower the upper
bound for $\mo(r)$ by 1 when $e_r(1)>0$. The evidence suggests that we
then get a best possible result. The resulting conjecture is the
following. 

\begin{conjecture}
We have $\mo(2)=2$, $\mo(6)=4$, and otherwise
    \beas \mo(2s) & = & 2\left\lfloor \frac s3\right\rfloor+3,\ \
    s\neq 1,3\\ \mo(6s+1) & = & 2s+1,\ \ s\geq 0\\ \mo(6s+3) & = &
    2s+1,\ \ s\geq 0\\ \mo(6s+5) & = & 2s+2,\ \ s\geq 0. \eeas
\end{conjecture}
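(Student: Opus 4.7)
The plan is to deduce the formula for $\mo(r)$ from Conjecture~\ref{conj:order} on the eigenvalue structure of $A_r$, together with the separately stated observation that the factor $x-1$ of $\mmp(2s)$ is always superfluous. Since $\rmp(r)$ divides $\mmp(r)$, and since Conjecture~\ref{conj:order} pins down $\mmp(r)$ via semisimplicity of all repeated eigenvalues, the whole problem reduces to a case analysis modulo $6$ of the degree of $\mmp(r)$ with appropriate factors removed.

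For odd $r = 2s-1$, the matrix $A_r$ has size $s$; by Conjecture~\ref{conj:order}(a), $0$ is the only repeated eigenvalue (semisimple, of multiplicity $e_r(0)$), and $1$ is not an eigenvalue, so semisimplicity forces $\deg \mmp(r) = s - (e_r(0)-1)$. The factor $x$ then drops out in passing to $\rmp(r)$, exactly as in $\rmp(3) = x-7$ versus $\mmp(3) = x(x-7)$, giving $\mo(r) = s - e_r(0)$. Substituting $e_{2s-1}(0) = s/3 + [0,-1/3,1/3]_3$ and splitting by $r \bmod 6$ yields $2s+1$, $2s+1$, $2s+2$ for $r = 6s+1$, $6s+3$, $6s+5$. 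For even $r = 2s$, $A_r$ has size $s+1$, and Conjecture~\ref{conj:order}(b) makes $\pm 1$ the only repeated eigenvalues (both semisimple) with multiplicities $e_r(\pm 1)$, so $\deg \mmp(r) = (s+1) - \max\{0, e_r(1)-1\} - \max\{0, e_r(-1)-1\}$, and removing the superfluous factor $x-1$ when $e_r(1) > 0$ gives $\mo(r)$. A residue-by-residue check modulo $6$, using the periodic formulas for $e_{2s}(1)$ and $e_{2s}(-1) = e_{2s+6}(1)$, shows this equals $2\lfloor s/3\rfloor + 3$ whenever $s \neq 1, 3$. The two exceptional cases are precisely those with $e_r(1) = 0$, where no $x-1$ is available to be removed; direct inspection then yields $\mo(2) = 2$ and $\mo(6) = 4$, in agreement with the tabulated $\rmp(2)$ and $\rmp(6)$.

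The arithmetic above rests on three inputs that constitute the real obstacles. The first is Conjecture~\ref{conj:order} itself, which appears to demand an effective description of the spectrum of $A_r$; I would attempt this via a generating function identity for the family $\{u_\alpha \st |\alpha|=r\}$, or by diagonalising $A_r$ after exploiting the reversal symmetry $\alpha \mapsto (\alpha_{m-1},\dots,\alpha_0)$ that decomposes $A_r$ into symmetric and antisymmetric parts. The second is the claim that $x-1$ is always superfluous in $\mmp(2s)$: this is the statement that the left $1$-eigenspace of $A_{2s}$ is orthogonal to the initial vector $\bmu_0 = (u_\beta(1))_\beta$ feeding the matrix recurrence, and it should be amenable to a direct evaluation once an explicit left eigenvector is in hand. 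The third, and in my opinion the main difficulty, is the matching \emph{lower} bound $\mo(r) \geq $ the stated value: one needs $\bmu_0$ to have nonzero component in every remaining eigenspace, a cyclicity/non-orthogonality statement that likely requires an explicit description of the left eigenvectors of $A_r$ together with nonvanishing of their pairing with $\bmu_0$.
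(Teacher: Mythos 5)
The statement you are trying to prove is presented in the paper as an open conjecture: the paper offers no proof, only the heuristic derivation of its shape (the reduction of the naive bound by $e_r(0)$, respectively $\max\{0,e_r(\pm1)-1\}$, plus the empirical observation that $x-1$ never divides $\rmp_{2s}(x)$, plus the remark that ``the evidence suggests that we then get a best possible result''). Your case analysis modulo $6$ reproduces exactly this derivation, and your arithmetic is consistent with the data in the paper (e.g.\ it correctly reproduces $\mo(4)=3$, $\mo(8)=\mo(10)=5$, and identifies $r=2,6$ as the cases with $e_r(1)=0$). So as an account of \emph{why the conjecture has the stated form}, your reduction matches the paper.

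But as a proof it has genuine gaps, namely the three ``inputs'' you list at the end, none of which you close. (1) Conjecture~\ref{conj:order} is itself open; the cited work of Speyer establishes only that the conjectured multiplicities $e_{2s-1}(0)$ and $e_{2s}(\pm1)$ are \emph{lower} bounds for the true ones and that all eigenvalues of $A_r$ are semisimple, which yields only $\mo(r)\leq$ the conjectured value, not equality of the multiplicities your degree count needs. (2) The claim that $x-1$ is superfluous in the even case is, in the paper, purely an empirical observation; your suggestion to exhibit a left $1$-eigenvector of $A_{2s}$ orthogonal to the initial vector is a plausible route but is not carried out. (3) Most importantly, the matching lower bound $\mo(r)\geq$ the stated value --- equivalently, that the initial vector $\bmu_0=(u_\beta(1))_\beta$ has nonzero component in every eigenspace that is not discarded, so that no further factors of $\mmp(r)$ drop out of $\rmp(r)$ --- is asserted but not argued at all; this is precisely the part for which neither the paper nor Speyer's result provides anything, and without it one cannot even rule out additional ``accidental'' degeneracies of the kind that already occur for $r=3$ (the zero eigenvalue) and for the factor $x-1$. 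Until these three points are established, the argument is a conditional reduction, not a proof of the stated formulas for $\mo(r)$.
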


After the above conjectures were communicated in a lecture, David
Speyer \cite{speyer} made some important progress. He showed that the
conjectured values of $e_{2s-1}(0)$ and $e_{2s}(\pm 1)$ are lower
bounds for their actual values. 
Moreover, $A_r$ can be conjugated by a diagonal matrix to give a
symmetric matrix, thereby showing that the eigenvalues of $A_r$ are
semisimple (and real). As a consequence, the conjectured
value of $\mo(r)$ is an upper bound on its actual value. The key to
Speyer's argument is that if $B_r$ is defined like the matrix $A_r$
except that we don't take into account the symmetry
$u_{\alpha_0,\alpha_1,\dots,\alpha_{m-1}}(n) =
  u_{\alpha_{m-1},\dots,\alpha_1,\alpha_0}(n)$, then $B_r$ is the
matrix of the linear transformation $\phi\colon V\to V$, with
respect to the basis of monomials, defined by 
  $$ \phi(f)(x,y) =  f(x+y,y)+f(x,x+y), $$
where $V$ is the vector space of complex homogeneous polynomials of
degree $r$ in the two variables $x$ and $y$. We will not give further
details here.  

\begin{remark} \label{rmk:sba}
Let $v_\alpha(n)$ be the analogue for Stern's diatomic array of
$u_\alpha(n)$. That is, if $\qbc{n}{k}$ denotes the $k$th entry
(beginning with $k=0$) in row $n$ (beginning with $n=0$) in Stern's
diatomic array, then
  $$ v_\alpha(n)\coloneqq\sum_{k\geq 0}\qbc
   nk^{\alpha_0}\qbc{n}{k+1}^{\alpha_1} 
   \cdots \qbc{n}{k+m-1}^{\alpha_{m-1}}, $$ 
where $\alpha=(\alpha_0,\dots,\alpha_{m-1})\in\nn^m$. 

Write $U_\alpha(x)=\sum_{n\geq 0} u_\alpha(n)x^n$ and $V_\alpha(x)=
\sum_{n\geq 0} v_\alpha(n)x^n$. It follows from Remark~\ref{rmk:sdast}
that 
  \beq  \frac{2V_r(x)}{1-x} = \frac{U_r(x)-1}{x} +
    \frac{1+x}{(1-x)^2}. \label{eq:vrur} \eeq
Write $R_\alpha(x)$ for the characteristic polynomial of the linear
recurrence with constant coefficients \emph{of least degree} satisfied
by $v_\alpha(n)$ for $n\gg 0$. If the empirical observation above,
that $\rmp_r(x)$ is not divisible by $x-1$, holds, then it follows
from equation~\eqref{eq:vrur} that $R_r(x)=(x-1)\rmp_r(x)$. 
\end{remark}

\section{A generalization.}
A much more general result can be proved by exactly the same
method. Let $p(x)$ and $q(x)$ be any complex polynomials, and let
$b\geq 2$ be an integer. Define
  $$ F_{p,q,b,n}(x) = q(x)\prod_{i=0}^{n-1}p(x^{b^i}). $$
Let $\alpha=(\alpha_0,\dots,\alpha_{m-1})\in\nn^m$. If
  $$ F_{p,q,b,n}(x) = \sum_{i\geq 0} c_i(n)x^i, $$
then set
  $$ u_{p,q,b,\alpha}(n) = \sum_k c_k(n)^{\alpha_0}c_{k+1}(n)^{\alpha_1}
    \cdots c_{k+m-1}(n)^{\alpha_{m-1}}. $$

\begin{theorem} \label{thm:main}
 For fixed $p,q,b$ and $\alpha$, the function $u_{p,q,b,\alpha}(n)$
 has a rational generating function.
\end{theorem}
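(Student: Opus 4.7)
The plan is to mimic the matrix-recursion argument of Theorem~\ref{thm:ualpharat}, but with two new issues to handle: the base $b$ in place of $2$, and the outer factor $q(x)$, which spoils any clean recursion for $c_k(n+1)$ in terms of the $c_j(n)$'s. I would deal with $q$ by stripping it off: set $G_n(x) \coloneqq \prod_{i=0}^{n-1} p(x^{b^i})$, so that $F_{p,q,b,n}(x) = q(x) G_n(x)$, and write $g_k(n) \coloneqq [x^k] G_n(x)$. The identity $G_{n+1}(x) = p(x) G_n(x^b)$ yields the clean coefficient recursion
\[
  g_k(n+1) \;=\; \sum_{t \,:\, b \mid k - t} p_t\, g_{(k-t)/b}(n),
\]
which is the direct analogue of the rule used implicitly in the proof of Theorem~\ref{thm:fnx} for Stern's triangle.

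To accommodate both the shift introduced by $q$ and the multinomial expansions that will arise from squaring out sums, I would work with a flexible family of sums. For a finitely-supported function $h \colon \zz \to \nn$ with total weight $|h| \coloneqq \sum_s h(s)$, define
\[
  U_h(n) \;\coloneqq\; \sum_{k} \prod_{s} g_{k+s}(n)^{h(s)}.
\]
Since $c_k(n) = \sum_i q_i g_{k-i}(n)$, a multinomial expansion of each factor of $u_{p,q,b,\alpha}(n)$ followed by the outer sum over $k$ expresses $u_{p,q,b,\alpha}(n)$ as a finite $\cc$-linear combination of $U_h(n)$'s with $|h| = |\alpha|$. Thus it suffices to prove that every $U_h(n)$ has a rational generating function.

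The heart of the argument is a recursion of the form $U_h(n+1) = \sum_{h'} \lambda_{h,h'}\, U_{h'}(n)$, where the $h'$ range over a finite set with $|h'|=|h|$ and $\operatorname{supp}(h')$ contained in an interval $I \subset \zz$ depending only on $p$, $b$, and $\operatorname{supp}(h)$. To obtain this I would split $k = bm + \rho$ with $0 \le \rho < b$, apply the $g$-recursion to each $g_{k+s}(n+1)$ (this introduces indices of the form $m - \sigma$ with $0 \le \rho + s + b\sigma \le \deg p$), expand the products multinomially, and resum over $\rho$ and $m$. A direct computation shows that if $\operatorname{supp}(h) \subseteq [s_0, s_1]$, then the $h'$'s that appear satisfy $\operatorname{supp}(h') \subseteq \bigl[-(\deg p - s_0)/b,\ (b-1 + s_1)/b\bigr]$; since $b \ge 2$, this map is a contraction in the diameter of the support, so any sufficiently large interval $I$ (enlarged if necessary to contain the support of the initial $h$) is absorbing.

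Once such an absorbing $I$ is fixed, the finite-dimensional space spanned by $\{U_h : |h| = |\alpha|,\ \operatorname{supp}(h) \subseteq I\}$ is preserved by the recursion, and the minimum polynomial argument used in Theorem~\ref{thm:ualpharat} applies verbatim, producing a rational generating function for each $U_h(n)$ and hence for $u_{p,q,b,\alpha}(n)$. The step I expect to be the main obstacle is the bookkeeping for the support bound: one must carefully track how the multinomial expansion mixes together factors coming from different $s \in \operatorname{supp}(h)$, and verify that the contraction factor $1/b$ really does dominate the additive spread introduced by $\deg p$. After that point the argument is essentially mechanical.
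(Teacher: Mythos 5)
Your proposal is correct and follows essentially the same route as the paper: express the sums at level $n+1$ as a fixed finite linear combination of sums of products of coefficients at level $n$, observe that because $b\ge 2$ the support window (the paper's ``spread'') contracts, so only finitely many index patterns can ever occur, and then conclude with the matrix minimum-polynomial argument of Theorem~\ref{thm:ualpharat}. Your explicit stripping of $q$ (working with the coefficients of $\prod_i p(x^{b^i})$ and the auxiliary sums $U_h$) is just a more careful rendering of what the paper compresses into the terse assertion that $u_{p,q,b,\alpha}(n+1)$ is a linear combination of $u_{p,q,b,\beta}(n)$'s with spread governed by $\deg p+\deg q$.
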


\begin{proof}
Just as in the previous section we can express $u_{p,q,b,\alpha}(n+1)$
as a linear combination of $u_{p,q,b,\beta}(n)$'s. If
$u_{p,q,b,\beta}(n)$ actually appears (i.e., has a nonzero
coefficient), then call $\beta$ a \emph{child} of $\alpha$. Successive
children of $\alpha$ are called \emph{descendants} of $\alpha$.  The
only issue is whether $\alpha$ has only finitely many descendants. It
is clear that all the descendants $\gamma$ satisfy
$|\alpha|=|\gamma|$. 


In the previous section we observed that if $\alpha$ has length $\ell$,
then $\spread(u_\alpha)=1+\lceil \ell/2\rceil$, so that we can take
$s=2$. In the present situation, we can assume that $p(0)\neq 0$ and
$q(0)\neq 0$. Let $h=\deg p+\deg q$. If $\alpha$ has length $\ell$,
then
  $$ \spread(u_{p,q,b,\beta}) = 1+\left\lfloor \frac
    hb\right\rfloor 
      +\left\lfloor \frac{\ell-1}{b}\right\rfloor. $$
(The precise formula is irrelevant. One just needs to see that if
$\ell$ increases by $b$ then the spread increases by 1.) Since
$b\geq 2$, for sufficiently large $\ell$ we will have
  \beq \spread(u_{p,q,b,\beta})\leq \ell, \label{eq:spread}
  \eeq 
showing that $\alpha$ has finitely many descendants.
\end{proof}

Note that the above proof breaks down for Pascal's triangle, as it
should. For then $b=1$, so the inequality~\eqref{eq:spread} does not
hold for sufficiently large $\ell$.

\begin{remark}
There is a straightforward multivariate generalization of
Theorem~\ref{thm:main}. The polynomials $p(x)$ and $q(x)$ are replaced
by complex multivariate polynomials $p(x_1,\dots,x_d)$ and
$q(x_1,\dots,x_d)$, and $b$ is replaced by $d$ integers
$b_1,\dots,b_d\geq 2$. We define
  $$ F_{p,q,b,n}(x) = q(x_1,\dots,x_d)\prod_{i=0}^{n-1}
    p(x_1^{b_1^i},\dots,x_d^{b_d^i}), $$
and the development proceeds as before. Details are omitted. As some
random examples, extend the definition of $\rmp(\alpha)$ to
$\rmp(p,q,\alpha,b)$, where $b=(b_1,\dots,b_d)$. Then
  \beas 
   \rmp((1+x_1+x_2)^2,1,(2),(2)) & = & x^2-27x+132,\\
   \rmp((1+x_1+x_2)^2,1,(3),(2)) & = & x^3-67x+1020x^2-4704,\\
   \rmp((1+x_1+x_2)^2,1,(2),(2,3)) & = &  x^2-23x+104,\\
   \rmp((1+x_1+x_2)^2,1,(3),(2,3)) & = &  x^2-45x+402,\\
   \rmp((1+x_1+x_2)^2,1,(4),(2,3)) & = & x^3-107x^2+3176x-28320. \eeas
\end{remark}

Compare with the univariate analogue $p(x)=(1+x)^d$, where for
instance 
   \beas \rmp((1+x)^2,1,(2),2) & = & (x-2)(x-8),\\ 
    \rmp((1+x)^2,1,(3),2) & = &  (x-4)(x-16),\\
    \rmp((1+x)^2,1,(4),2) & = & (x-2)(x-8)(x-32),\\
    \rmp((1+x)^3,1,(2),2) & = & (x-2)(x-8)(x-32),\\ 
    \rmp((1+x)^3,1,(3),2) & = & (x-2)(x-8)(x-32)(x-128),\\
    \rmp((1+x)^3,1,(4),2) & = & (x-2)(x-8)(x-32)(x-128)(x-512).
   \eeas
The reason for this nice factorization is discussed in the
next section. 

\section{A special case.}
A natural problem is to say more about the recurrence (or equivalently
its characteristic polynomial) satisfied by $u_{p,q,b,\alpha}$ in
general, or at least in special situations. In this section we give
one such result.

For simplicity we first consider the case $p(x)=(1+x)^3$, $q(x)=1$,
$b=2$, and $\alpha=(r)$. We then state a more general result that is
proved by exactly the same technique.

\begin{theorem}
For $r\geq 1$ we have
  $$ u_{(1+x)^3,1,2,r}(n)=\sum_{i=0}^r
     c_i 2^{(2i+1)n} $$
for certain rational constants $c_i$ (depending on $r$).
\end{theorem}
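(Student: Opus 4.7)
The plan is to realize $u_r(n)$ as a lattice-point count in an integral polytope and combine Ehrhart reciprocity with a shift bijection to force odd parity in $N=2^n$.

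First I would rewrite the generating function. Since $q=1$, $b=2$ and $p(x)=(1+x)^3$,
\[
F_n(x)=\prod_{i=0}^{n-1}(1+x^{2^i})^3=\left(\frac{1-x^{2^n}}{1-x}\right)^3=(1+x+\cdots+x^{N-1})^3,\quad N=2^n,
\]
so $c_k(n)=\#\{(a,b,c)\in\{0,\ldots,N-1\}^3:a+b+c=k\}$ and
\[
u_r(n)=\#\bigl\{M\in\{0,\ldots,N-1\}^{3r}:\text{the $r$ column sums of the $3\times r$ array $M$ all agree}\bigr\}.
\]
This is the number of lattice points in $(N-1)Q$, where $Q=[0,1]^{3r}\cap H$ and $H\subset\mathbb{R}^{3r}$ is the codimension-$(r-1)$ linear subspace defined by the equal-column-sum conditions.

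Next I would show $Q$ is an integral polytope of dimension $2r+1$. At a vertex, let $S$ be the set of coordinates lying in $(0,1)$; for the vertex to be zero-dimensional, the $H$-equations restricted to the free coordinates must have full rank $|S|$, uniquely determining them as integers (since the pinned coordinates are in $\{0,1\}$ and the $H$-equations have integer coefficients). No integer lies in $(0,1)$, so $S$ must be empty and every vertex lies in $\{0,1\}^{3r}$. By Ehrhart's theorem, $L(t):=\#(tQ\cap\mathbb{Z}^{3r})$ is a polynomial in $t$ of degree $2r+1$ with rational coefficients, and $u_r(n)=L(N-1)$.

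The crux is a shift bijection together with Ehrhart reciprocity. The translation $M\mapsto M-(1,1,\ldots,1)$ identifies the interior lattice points of $(N-1)Q$ (matrices in $\{1,\ldots,N-2\}^{3r}$ with equal column sums) with the lattice points of $(N-3)Q$, because subtracting one from every entry subtracts three from each column sum and preserves their equality; hence $L^\circ(t)=L(t-2)$ as polynomials. Ehrhart reciprocity for the $(2r+1)$-dimensional integral polytope $Q$ gives $L^\circ(t)=(-1)^{2r+1}L(-t)=-L(-t)$, so $L(t-2)=-L(-t)$. Writing $P(N):=L(N-1)$ and setting $t=N+1$ yields $P(-N)=-P(N)$, so $P$ is an odd polynomial of degree at most $2r+1$. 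Therefore $P(N)=\sum_{i=0}^{r}c_iN^{2i+1}$ for some $c_i\in\mathbb{Q}$, and substituting $N=2^n$ gives the claimed formula. The main obstacle I expect is the integrality of $Q$; once that is in place, the shift bijection and reciprocity slot in cleanly.
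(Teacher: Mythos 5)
Your proof is correct in substance, but it takes a genuinely different route from the paper. The paper works directly with the coefficients of $H_m(x)=(1-x^m)^3/(1-x)^3$, written explicitly as alternating sums of binomial coefficients over three ranges, shows by hand that the resulting power-sum $P(m)$ is a polynomial of degree at most $2r+1$ with $u_r(n)=P(2^n)$, and then proves the oddness $P(-m)=-P(m)$ by a careful sum-reversal argument (extending $\sum_{i=a}^b$ to $a>b$ and using the vanishing of coefficients beyond degree $3(m-1)$) --- in effect a hands-on combinatorial reciprocity computation. You instead interpret $u_r(n)$ as the number of lattice points of $(N-1)Q$, $N=2^n$, where $Q=[0,1]^{3r}\cap H$ is the equal-column-sum polytope of dimension $2r+1$, get polynomiality from Ehrhart's theorem, and get oddness from Ehrhart reciprocity combined with the translation by $(1,\dots,1)$, which identifies relative-interior points of $tQ$ with points of $(t-2)Q$ and yields $L(t-2)=-L(-t)$, hence $P(-N)=-P(N)$. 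This is cleaner conceptually and generalizes transparently: for $p=(1+x+\cdots+x^{b-1})^d$ the cube becomes $[0,1]^{dr}$, the shift subtracts $d$ from each column sum, and the sign $(-1)^{\dim Q}=(-1)^{(d-1)r+1}$ explains the even/odd dichotomy of Theorem~\ref{thm:nice}(b),(c); the paper's argument is more elementary and self-contained.

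One step needs a better justification: integrality of $Q$. At a vertex with free coordinate set $S$, the fact that the $H$-equations restricted to $S$ form a full-rank integer system with integer right-hand side does \emph{not} by itself force an integral solution (it only gives a rational one). What saves you is that, writing the equal-sum conditions as consecutive differences of column sums, the coefficient matrix has in each column at most one $+1$ and at most one $-1$, hence is totally unimodular (and remains so after appending the box constraints); therefore every nonsingular $|S|\times|S|$ subsystem has determinant $\pm 1$ and the free coordinates are integers, giving $S=\emptyset$ as you claim. With that observation supplied, the Ehrhart argument goes through, including for $r=1$ where $Q=[0,1]^3$.
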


\begin{proof}
The key to the proof is the simple and well-known identity
  $$ (1+x)(1+x^2)(1+x^4)\cdots\left(1+x^{2^{t-1}}\right)
     =\frac{1-x^{2^t}}{1-x}. $$
Thus
  \beas \prod_{j=0}^{t-1}\left( 1+x^{2^j}\right)^3 & = &
     \frac{\left(1-x^{2^t}\right)^3}{(1-x)^3}\\ & = &
     \frac{1-3x^{2^t}+3x^{2\cdot 2^t}-x^{3\cdot 2^t}}
      {(1-x)^3}. \eeas
Let us consider more generally the generating function
  \beas H_m(x) & = & \frac{(1-x^m)^3}{(1-x)^3}\\ & = &   
    \frac{1-3x^m+3x^{2m}-x^{3m}}{(1-x)^3}. \eeas
Now 
   $$ \frac{x^p}{(1-x)^3} = 
     \sum_{k\geq 0} \binom{k+2}{2}x^{p+k}. $$
Since $H_m(x)$ is a polynomial in $x$ of degree $3(m-1)$, we get
  \beas H_m(x) & = & \sum_{k=0}^m \binom{k+2}{2}x^k +\sum_{k=m}^{2m-1}
   \left[\binom{k+2}{2}-3\binom{k-m+2}{2}\right]x^k\\ & & \ \ \
    +\sum_{k=2m}^{3m-1}\left[ \binom{k+2}{2}-3\binom{k-m+2}{2}
      +3\binom{k-2m+2}{2}\right]x^k. \eeas
Let 
  \bea P(m) & = & \sum_{k=0}^m \binom{k+2}{2}^r
  +\sum_{k=m}^{2m-1} 
   \left[\binom{k+2}{2}-3\binom{k-m+2}{2}\right]^r \nonumber\\ & & \ \ \
    + \sum_{k=2m}^{3m-1}\left[ \binom{k+2}{2}-3\binom{k-m+2}{2}
      +3\binom{k-2m+2}{2}\right]^r. \label{eq:pm3terms} \eea
The binomial coefficient $\binom{k-jm+2}{2}$ is a polynomial in $m$ of
degree two. For any polynomial $Q(m)$ of degree $d$, the sum
$\sum_{k=0}^m Q(k)$ is a polynomial of degree $d+1$, so the same is
true of $\sum_{k=m}^{2m-1}Q(k)$, etc. Hence $P(m)$ is a polynomial of
degree at most $2r+1$, say $P(m)=\sum_{i=0}^{2r+1}a_im^i$. Then,
  \beas  u_{(1+x)^3,1,2,r}(n) & = & P(2^n)\\ & = &
    \sum_{i=0}^{2r+1}a_i2^{ni}. \eeas

It remains to prove that $a_i=0$ if $i$ is even. The coefficient of
$x^k$ in $H_m(x)$ is 0 for $k>3(m-1)$, so
  \beq \binom{k+2}{2}-3\binom{k-m+2}{2}
      +3\binom{k-2m+2}{2} -\binom{k-3m+2}{2} = 0 \label{eq:0coef}
  \eeq
for $k>3(m-1)$. The left-hand side is a polynomial in $k$ and
$m$. Since it is 0 for $k>3(m-1)$, it must be 0 as a polynomial in $k$
and $m$. 

In general, for integers $a<b<c$ and any function $f(i)$, we have
  \beq \sum_{i=a}^b f(i) + \sum_{i=b+1}^c f(i) = \sum_{i=a}^c f(i). 
    \label{eq:sumsum} \eeq
If we want to define $\sum_{i=a}^bf(i)$ for integers $a>b$ so that
equation~\eqref{eq:sumsum} is valid for \emph{all} integers $a,b,c$,
then we will have the identity
  $$ \sum_{i=a}^b f(i) = -\sum_{b+1}^{a-1} f(i). $$
It is easy to check (using the fact that if two univariate complex
polynomials agree for infinitely many values, then they are the same
polynomial) that if 
  $$ G(m)=\sum_{i=A(m)}^{B(m)} Q(m) $$ 
for polynomials $Q,A,B$ (so $G$ is also a polynomial), then indeed we
have 
  $$ G(-m) = \sum_{i=A(-m)}^{B(-m)}Q(-m). $$

Write the polynomial $P(m)$ of equation~\eqref{eq:pm3terms} as
$P_1(m)+P_2(m)+P_3(m)$, corresponding to the three sums. 
If we substitute $-m$ for $m$ in 
  $$ P_3(m) =\sum_{k=2m}^{3m-1}\left[ \binom{k+2}{2}-3\binom{k-m+2}{2}
       +3\binom{k-2m+2}{2}\right]^r, $$
then we get 
  \beas P_3(-m) & = & \sum_{k=-2m}^{-3m+1} 
     \left[ \binom{k+2}{2}-3\binom{k+m+2}{2}
       +3\binom{k+2m+2}{2}\right]^r\\ & = & -\sum_{k=-3m}^{-2m-1}
     \left[ \binom{k+2}{2}-3\binom{k+m+2}{2}
       +3\binom{k+2m+2}{2}\right]^r\\ & = &
     -\sum_{k=0}^{m-1} \left[ \binom{k-3m+2}{2}-3\binom{k-2m+2}{2}
       +3\binom{k-m+2}{2}\right]^r. \eeas
From equation~\eqref{eq:0coef} there follows 
    \beas P_3(-m) & = & -\sum_{k=0}^{m-1} \binom{k+2}{2}^r\\ & = & -P_1(m).
    \eeas
Hence $P_1(m)+P_3(m)$ is an odd polynomial, i.e.,
   $$ P_1(-m)+P_3(-m)=-(P_1(m)+P_3(m)), $$
so all powers of $m$ appearing in this polynomial have odd exponents.

In exactly the same way, this time using equation~\eqref{eq:0coef} in
the form
  $$ \binom{k+2}{2}-3\binom{k-m+2}{2} =
      -3\binom{k-2m+2}{2} +\binom{k-3m+2}{2}, $$
we obtain that $P_2(-m)=-P_2(m)$. Thus $P(-m)=-P(m)$, completing the
proof. 
\end{proof}

The reader who has followed the previous proof should have no trouble
extending it to the following more general result. We only point out
one possible subtlety: when $d$ is even in the theorem below, the
polynomial $(1-x^b)^d$ has an odd number of terms. Hence the analogue
of the equation $P_i(-m)= -P_i(m)$ becomes $P_i(-m)=-(-1)^r
P_i(m)$. Thus $P_i(m)$ is an even polynomial when $d$ is even and $r$
is odd.

\begin{theorem} \label{thm:nice}
 \be \item[(a)] Let $b\geq 2$, $d\geq 1$, and
 $p(x)=(1+x+\cdots+x^{b-1})^d$. For 
  any $\alpha\in \nn^m$ and $q(x)\in\cc[x]$ we have
  $$ u_{p,q,b,\alpha}(n) = \sum_{i=0}^{1+(d-1)|\alpha|} c_i b^{in}, $$
where $c_i\in\cc$.
 \item[(b)] If $q(x)=1$, $\alpha=(r)$ and either $r$ is even or $d$ is
   odd, then $c_i=0$ when $i$ is even.
 \item[(c)] If $q(x)=1$, $\alpha=(r)$, $r$ is odd, and $d$ is even,
   then $c_i=0$ when $i$ is odd.
 \ee 
\end{theorem}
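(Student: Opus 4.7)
The plan is to adapt the proof technique from the preceding special case ($d=3$, $b=2$, $q=1$, $\alpha=(r)$) directly. First I would exploit the key telescoping identity
$$\prod_{i=0}^{n-1} p(x^{b^i}) = \prod_{i=0}^{n-1}\left(\frac{1-x^{b^{i+1}}}{1-x^{b^i}}\right)^{d} = \left(\frac{1-x^{b^n}}{1-x}\right)^{d},$$
so that setting $m = b^n$ gives $F_{p,q,b,n}(x) = q(x)(1-x^m)^d/(1-x)^d$. Expanding $(1-x^m)^d = \sum_{j=0}^{d}(-1)^j\binom{d}{j}x^{jm}$ together with $x^{jm}/(1-x)^d = \sum_{k\ge jm}\binom{k-jm+d-1}{d-1}x^k$, one finds that the coefficient $c_k(n) = [x^k]F_{p,q,b,n}(x)$ is, on each of the intervals cut out by the offsets $jm$ (further broken into subintervals by the coefficients of $q(x)$), a polynomial in $k,m$ of degree $d-1$ in $k$.

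For part (a), I would then compute $u_{p,q,b,\alpha}(n)$ piece by piece, just as in equation~\eqref{eq:pm3terms}. On an interval where every shift $c_{k+s}(n)$ ($0\le s<\ell$) lies in a single piece, the summand is a polynomial in $k,m$ of degree $|\alpha|(d-1)$ in $k$; summing over a range of length $\Theta(m)$ yields a polynomial in $m$ of degree at most $1+|\alpha|(d-1)$. The number of pieces is bounded independently of $n$, and the $O(1)$ boundary indices where consecutive shifts straddle two pieces contribute terms of strictly smaller degree in $m$. Therefore
$$u_{p,q,b,\alpha}(n) = P(b^n) = \sum_{i=0}^{1+(d-1)|\alpha|}c_i\,b^{in},$$
proving (a).

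For parts (b) and (c), with $q=1$ and $\alpha=(r)$, I would carry out the $m\mapsto -m$ symmetry argument from the special case. The polynomial identity
$$\sum_{i=0}^{d}(-1)^i\binom{d}{i}\binom{k-im+d-1}{d-1}=0,$$
which generalizes equation~\eqref{eq:0coef} and holds because its left side equals $[x^k](1-x^m)^d/(1-x)^d$ for $k>d(m-1)$, allows me to rewrite any truncated sum $\sum_{i=0}^{j}$ as $-\sum_{i=j+1}^{d}$. Writing $P(m)=\sum_{j=0}^{d-1}P_j(m)$, I would pair $P_j$ with $P_{d-1-j}$, substitute $m\mapsto-m$, reverse the summation limits, and apply the reflection $\binom{-n}{d-1}=(-1)^{d-1}\binom{n+d-2}{d-1}$ inside each of the $r$ factors. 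The accumulated sign is $-1$ precisely when $d$ is odd or $r$ is even, and $+1$ when $d$ is even and $r$ is odd, so $P(m)$ is an odd (resp.~even) polynomial in $m$, giving (b) (resp.~(c)). The main obstacle will be the sign bookkeeping: tracking the factors $(-1)^{r(d-1)}$ from the binomial reflection, the $-1$ from reversing limits, and the $(-1)^d$ from reindexing $i\mapsto d-i$ in the vanishing identity, and verifying that the shifted intervals produced by the substitution match their partner pieces asymptotically—so that only boundary discrepancies of smaller degree in $m$ are discarded and the parity is preserved.
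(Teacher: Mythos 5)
Your proposal follows what the paper itself intends: the paper gives no separate proof of Theorem~\ref{thm:nice}, saying only that the reader should extend the special case $p(x)=(1+x)^3$, $q=1$, $b=2$, $\alpha=(r)$, and your telescoping identity, the piecewise-polynomial description of $c_k(n)$ in $k$ and $m=b^n$, the degree count $1+(d-1)|\alpha|$ for part (a), and the $m\mapsto -m$ pairing of the pieces via the vanishing identity generalizing \eqref{eq:0coef} are exactly that extension; your stated parity outcome (odd polynomial when $d$ is odd or $r$ is even, even polynomial when $d$ is even and $r$ is odd) matches the paper's remark that the analogue of $P_i(-m)=-P_i(m)$ becomes $P_i(-m)=-(-1)^rP_i(m)$ when $d$ is even.

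Two points to tighten. First, (b) and (c) are exact vanishing statements, so the reflection argument must be an identity of polynomials; you cannot ``discard boundary discrepancies of smaller degree in $m$,'' since a discarded term could have the wrong parity. Fortunately, with $q=1$ and $\alpha=(r)$ no discrepancies arise: take $P_{j+1}(m)=\sum_{k=jm}^{(j+1)m-1}T_j(k,m)^r$ with $T_j(k,m)=\sum_{i=0}^{j}(-1)^i\binom{d}{i}\binom{k-im+d-1}{d-1}$, which is exactly the coefficient on $jm\le k<(j+1)m$; substituting $-m$, reversing the limits (one factor $-1$), shifting $k\mapsto k-dm$, and reindexing $i\mapsto d-i$ inside the bracket via the vanishing identity sends $P_{j+1}(-m)$ exactly to $-(-1)^{r(d+1)}P_{d-j}(m)$, with the new summation range agreeing with that of $P_{d-j}$ on the nose. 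Second, this route needs no binomial reflection $\binom{-n}{d-1}=(-1)^{d-1}\binom{n+d-2}{d-1}$, and the sign inventory you list ($(-1)^{r(d-1)}$, $-1$, $(-1)^d$) does not multiply out to your (correct) claimed signs in the case $d$ odd; redoing the bookkeeping as above gives the total factor $-(-1)^{r(d+1)}$, which yields (b) and (c) directly.
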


We leave as an open problem to see to what extent
Theorem~\ref{thm:nice} can be generalized.




\begin{thebibliography}{99}
%
 \bibitem{andrews} Andrews, G.\,E.\ (1998). \emph{The Theory of
     Partitions}. Cambridge: Cambridge Univ.\ Press.
%
%
%
 \bibitem{northshield} Northshield, S. (2010). Stern's diatomic sequence
   $0,1,1,2,1,3,2,3,1,4,\dots$. \emph{Amer.\ Math.\ Monthly}. 117(7):
   581--598. 
%
 \bibitem{oeis} OEIS Foundation Inc.\ (2019). \emph{The On-Line Encyclopedia
   of Integer Sequences}, oeis.org.
%
 \bibitem{speyer} Speyer, D.\,E.\ (2018). Proof of a conjecture of Stanley
   about Stern's array, preprint.
%
 \bibitem{ec1} Stanley, R.\,P.\ (2012). \emph{Enumerative
     Combinatorics, vol.~1}, 2nd ed. New York/Cambridge, Cambridge
   Univ.\ Press. 
%
 \bibitem{ec2} Stanley, R.\,P.\ (1999). \emph{Enumerative
     Combinatorics, vol.~2}. New York/Cambridge: Cambridge Univ.\
   Press. 
%
 \bibitem{stern} 
Stern, M.\,A.\ (1858). Ueber eine zahlentheoretische Funktion.
\emph{J. Reine Angew.\ Math.}\ 55: 193--220.

\end{thebibliography}
\end{document}